\def\rth{\mathbb{R}^3}
\def\R{\mathbb{R}}
\def\g{\gamma}
\def\B{\mathbb{B}}
\def\N{\mathbb{N}}
\newcommand{\nc}{\newcommand}
\newcommand{\ben}{\begin{enumerate}}
\newcommand{\bit}{\begin{itemize}}
\newcommand{\een}{\end{enumerate}}
\newcommand{\eit}{\end{itemize}}
\newcommand{\wh}{\widehat}
\newcommand{\Int}{\mathrm{Int}}
\newcommand{\cD}{{\mathcal D}}
\newcommand{\cC}{{\mathcal C}}
\newcommand{\cR}{{\mathcal R}}
\newcommand{\cH}{{\mathcal H}}
\newcommand{\cF}{{\mathcal F}}
\newcommand{\wt}{\widetilde}
\newcommand{\cN}{\mathcal N}
\newcommand{\ov}{\overline}
\def\a{{\alpha}}
\def\t{{\theta}}
\def\g{{\gamma}}
\def\G{{\Gamma}}
\def\de{{\delta}}
\def\be{{\beta}}
\def\ve{{\varepsilon}}
\def\S{\Sigma}
\def\centerbmp#1#2#3{\vskip#2\relax\centerline{\hbox to#1{\special
    {bmp:#3 x=#1, y=#2}\hfil}}}
\newtheorem{theorem}{Theorem}[section]
\newtheorem{lemma}[theorem]{Lemma}
\newtheorem{proposition}[theorem]{Proposition}
\newtheorem{remark}[theorem]{Remark}
\newtheorem{ddescription}[theorem]{Description}
\newtheorem{property}[theorem]{Property}
\newtheorem{corollary}[theorem]{Corollary}
\newtheorem{definition}[theorem]{Definition}
\newtheorem{conjecture}[theorem]{Conjecture}
\newtheorem{assertion}[theorem]{Assertion}
\newtheorem{claim}[theorem]{Claim}
\newcommand{\ed}{\end{document}}
\nc{\bl}{\begin{lemma} }
\nc{\el}{\end{lemma} }
\nc{\bt}{\begin{theorem} }
\nc{\et}{\end{theorem} }
\newcommand{\rc}{ \renewcommand }
\rc{\v}{    \overset{\longrightarrow} }
    \newtheorem{theoremn}{Theorem}
\definecolor{b}{rgb}{.1,.1,.7}
\date{}
\begin{document}

\begin{title}
{The geometry of constant mean curvature surfaces in $\mathbb R^3$}
\end{title}

\begin{author}
{William H. Meeks III\thanks{This material is based upon
   work for the NSF under Award No. DMS-1309236.
   Any opinions, findings, and conclusions or recommendations
   expressed in this publication are those of the authors and do not
   necessarily reflect the views of the NSF.}
   \and Giuseppe Tinaglia\thanks{The second author was partially
   supported by
EPSRC grant no. EP/M024512/1}}
\end{author}
\maketitle



%
%
%

\begin{abstract}
We derive intrinsic curvature and radius estimates for compact disks embedded
in $\rth$ with nonzero constant mean curvature and apply these
estimates to study the global geometry of complete surfaces embedded
in $\rth$ with nonzero constant mean curvature.

\vspace{.3cm}

\noindent{\it Mathematics Subject Classification:} Primary 53A10,
   Secondary 49Q05, 53C42

\noindent{\it Key words and phrases:} Minimal surface,
constant mean curvature, curvature estimates. 
\end{abstract}
\maketitle


\section{Introduction.}
A longstanding  problem in classical surface theory
is to classify the complete, simply-connected surfaces embedded in $\rth$ with
constant mean curvature. In the case the surface is simply-connected and
compact, this classification
follows by work of either Hopf~\cite{hf1} in 1951 or of
Alexandrov~\cite{aa1} in 1956, who gave different proofs that
a round sphere is the only possibility.
In this paper we prove that if a complete, embedded simply-connected surface has nonzero
constant mean curvature, then it is compact.
\begin{theorem} \label{round} Complete,
simply-connected surfaces  embedded in
$\rth$ with nonzero constant mean curvature are compact, and thus are round spheres.
\end{theorem}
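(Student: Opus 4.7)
The plan is to deduce Theorem~\ref{round} from the intrinsic radius estimate for compact embedded disks with nonzero constant mean curvature announced in the abstract, whose proof is the main analytic content of the paper. After rescaling we may assume the mean curvature is one. The required form of the radius estimate is: there is a universal constant $R_0>0$ such that every compact embedded disk $D\subset\rth$ of constant mean curvature one satisfies $d_D(p,\partial D)\le R_0$ for every $p\in D$, where $d_D$ denotes the intrinsic distance on $D$.

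Suppose, toward contradiction, that $\Sigma$ is as in the statement but is not compact. Being a non-compact, simply-connected smooth $2$-manifold, $\Sigma$ is diffeomorphic to $\r^2$ by the classification of surfaces, and in particular admits a smooth exhaustion by compact topological disks $\Omega_1\subset\Omega_2\subset\cdots$, each with smooth boundary, all containing a fixed interior point $p$, and satisfying $\Omega_n\subset\mathrm{Int}(\Omega_{n+1})$. Because $\Sigma$ is complete, each closed intrinsic ball $\overline{B}_\Sigma(p,R)$ is compact by Hopf--Rinow, hence is contained in $\mathrm{Int}(\Omega_n)$ for $n$ sufficiently large; in particular $d_\Sigma(p,\partial\Omega_n)\ge R$. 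Since any path in $\Omega_n$ from $p$ to $\partial\Omega_n$ is also a path in $\Sigma$, the intrinsic distance on the subdomain dominates that of the ambient surface, and so
\[
d_{\Omega_n}(p,\partial\Omega_n)\;\ge\;d_\Sigma(p,\partial\Omega_n)\;\ge\;R.
\]
Choosing $R>R_0$ yields a compact embedded CMC disk $\Omega_n\subset\rth$ violating the radius estimate, a contradiction.

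Hence $\Sigma$ is compact. As a compact embedded surface of nonzero constant mean curvature in $\rth$, the classical theorem of Alexandrov~\cite{aa1} (or, using also simple connectivity, of Hopf~\cite{hf1}) forces $\Sigma$ to be a round sphere, completing the proof.

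The entire analytic difficulty resides in establishing the radius estimate and not in this reduction. In the estimate one must exploit the interplay among embeddedness in $\rth$, the condition $H\neq 0$, and the ambient flat geometry in order to control intrinsic size in the complete absence of information about $\partial D$. By contrast, the reduction above requires only Hopf--Rinow completeness and the purely topological fact that simply-connected non-compact surfaces are exhausted by compact smooth disks. The anticipated main obstacle therefore lies earlier in the paper, in the proof of the radius estimate, rather than in the argument presented here.
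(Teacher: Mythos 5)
Your proposal is correct and follows essentially the same route as the paper: Theorem~\ref{round} is deduced directly from the intrinsic radius estimate of Theorem~\ref{rest} (whose proof is indeed the analytic core of the paper), with the non-compact case ruled out by exhausting $\Sigma$ by compact disks of arbitrarily large intrinsic radius, and the compact case settled by Hopf/Alexandrov. You have merely written out the routine exhaustion and Hopf--Rinow details that the paper leaves implicit.
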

Theorem~\ref{round}, together with
results  of  Colding and Minicozzi~\cite{cm35}
and Meeks and Rosenberg~\cite{mr8} \nocite{bb1}
that  show that  the complete, simply-connected minimal surfaces
embedded in $\rth$ are planes
and helicoids, finishes the classification of complete
simply-connected surfaces embedded in $\rth$ with
constant mean curvature.

In the first section of this paper we observe how combining results in our previous papers, namely~\cite{mt8,mt7,mt9}, leads to the following
 intrinsic radius and curvature estimates for
embedded disks in $\rth$ with nonzero constant mean curvature, where
the {\em radius} of a compact Riemannian surface with
boundary is the maximum intrinsic distance of points in the surface to its boundary.

\begin{theorem}[Radius Estimates] \label{rest} There exists an
${\mathcal R}\geq \pi$ such that any compact disk embedded in
$\rth$ of constant mean curvature
$H>0$ has radius less than $\frac{{\mathcal R}}{H}$.
\end{theorem}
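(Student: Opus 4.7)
My plan is to rescale to $H=1$ and argue by contradiction, invoking the curvature and chord-arc estimates from \cite{mt7,mt8,mt9} to extract a complete limit surface, which I then rule out using classical structural theorems for properly embedded CMC surfaces in $\rth$.

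By the scale invariance $\Sigma\mapsto H\Sigma$, it suffices to produce a universal $\mathcal R\geq\pi$ bounding the radius of every compact embedded CMC-$1$ disk. Suppose this fails: there exists a sequence of compact embedded CMC-$1$ disks $\Sigma_n\subset\rth$ with radii $R_n\to\infty$. Pick basepoints $p_n\in\Sigma_n$ realizing the maximum intrinsic distance to $\partial\Sigma_n$ and translate so that $p_n=0$. The intrinsic curvature estimates from \cite{mt7,mt8} then provide a universal constant $C$ with $|A_{\Sigma_n}|\leq C$ on the intrinsic ball $B_{\Sigma_n}(0,R_n-1)$. Combining this with the chord-arc estimates of \cite{mt9}, which bound extrinsic distance in terms of intrinsic distance on such disks, yields uniform smooth bounds on $\Sigma_n$ in every fixed extrinsic ball about the origin.

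Standard compactness for properly embedded CMC surfaces with locally bounded second fundamental form then produces a subsequential smooth limit $\Sigma_\infty\subset\rth$: a complete, properly embedded CMC-$1$ surface through the origin. The chord-arc bound further ensures that $\Sigma_\infty$ is simply-connected, since any short loop in $\Sigma_\infty$ lifts to a loop in some $\Sigma_n$, and such a loop bounds a disk in the disk $\Sigma_n$; no nontrivial topology can appear in the limit.

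The principal obstacle is ruling out the existence of such a $\Sigma_\infty$. If $\Sigma_\infty$ is compact, Alexandrov's theorem identifies it with a unit round sphere, whose intrinsic diameter is $\pi$; this contradicts the assumption that the basepoint $0\in\Sigma_\infty$ admits arbitrarily large intrinsic balls in the approximating disks. If $\Sigma_\infty$ is non-compact, then being simply-connected with finite topology it has a unique end, which by the end-structure theorem of Korevaar--Kusner--Solomon must be asymptotic to a Delaunay unduloid; but such an end is topologically cylindrical and carries nontrivial loops, contradicting simple-connectedness. Either outcome is impossible, forcing $R_n$ to be ultimately bounded. The lower bound $\mathcal R\geq\pi$ is essentially sharp: a sphere of mean curvature $H$ with a small geodesic disk removed is a compact embedded CMC-$H$ disk whose radius approaches $\pi/H$.
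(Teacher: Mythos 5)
Your proposal takes a genuinely different route from the paper, but as written it has real gaps. The paper's proof is a two-line deduction: the weak chord-arc property (Theorem~\ref{thm1.1}) converts a large intrinsic ball centered at a point of a $1$-disk into a component of large \emph{extrinsic} radius, and this immediately contradicts the already-established Extrinsic Radius Estimate (Theorem~\ref{ext:rest}). You never invoke that extrinsic estimate; instead you attempt to re-derive a contradiction from scratch by a compactness/limit argument. That route is close in spirit to how the paper proves Theorem~\ref{ext:rest} itself, but two of your steps fail.

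First, the claim that the limit $\Sigma_\infty$ is simply-connected is unjustified and, in the relevant scenario, false. A loop in $\Sigma_\infty$ lifts to a loop in $\Sigma_n$ which bounds a disk in $\Sigma_n$, but that bounding disk can have intrinsic diameter tending to infinity and escape every compact set, so nontrivial topology can (and does) appear in the limit: the paper's proof of Theorem~\ref{ext:rest} shows that, after further translations, the natural limit of such disks is a Delaunay \emph{annulus}. (The limit is also in general only strongly Alexandrov embedded rather than embedded, so Alexandrov's theorem cannot be applied verbatim in the compact case.) Second, even granting simple-connectedness, your contradiction in the noncompact case is not a contradiction: an end asymptotic to a Delaunay unduloid is an annular end, and an annular end of a simply-connected surface (e.g.\ of $\R^2$) carries no loop that is nontrivial in the surface, so "topologically cylindrical" does not conflict with simple-connectedness. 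The correct way to close the argument is via flux: every $1$-cycle in a disk has zero flux, flux passes to the limit under smooth convergence, and a Delaunay-type end forces nonzero flux (equivalently, invoke Meeks's theorem that a properly embedded finite-topology $H$-surface cannot have exactly one end). Without the flux step, or without simply quoting the Extrinsic Radius Estimate as the paper does, the argument does not close.
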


\begin{theorem}[Curvature Estimates] \label{cest} Given $\delta$, $\cH>0$,
there exists a  $K(\delta,\cH)\geq\sqrt{2}\cH$ such that
any compact  disk $M$ embedded in $\rth$ with
constant mean curvature $H\geq \cH$ satisfies
\[
\sup_{\large \{p\in {M} \, \mid \,
d_{M}(p,\partial M)\geq \delta\}} |A_{M}|\leq  K(\delta,\cH),
\]
where $|A_{M}|$ is the norm of the second fundamental form
and $d_{M}$ is the intrinsic distance function of $M$.
\end{theorem}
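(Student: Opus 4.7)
My plan is to argue by contradiction using a blow-up analysis, combining Theorem~\ref{rest} (Radius Estimates) with the structural results from~\cite{mt7,mt8,mt9}.

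\textbf{Reduction.} Suppose the conclusion fails for some $\delta, \cH > 0$. Then there is a sequence of compact disks $M_n$ embedded in $\rth$ with constant mean curvatures $H_n \geq \cH$ and points $p_n \in M_n$ with $d_{M_n}(p_n, \partial M_n) \geq \delta$ and $|A_{M_n}|(p_n) \to \infty$. Since $M_n$ contains a point at intrinsic distance at least $\delta$ from $\partial M_n$, its radius is at least $\delta$; Theorem~\ref{rest} then forces $H_n \leq \cR/\delta$, so the mean curvatures lie in the compact interval $[\cH, \cR/\delta]$. Up to a subsequence, $H_n \to H_\infty \geq \cH$.

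\textbf{Blow-up limit.} Apply a standard Choi--Schoen-type point-picking inside the intrinsic ball $B_{M_n}(p_n, \delta/2)$ to produce a new sequence of points $q_n$ and scales $r_n$ with $\lambda_n := |A_{M_n}|(q_n) \to \infty$, $\lambda_n r_n \to \infty$, $r_n \to 0$, and $|A_{M_n}| \leq 2\lambda_n$ on $B_{M_n}(q_n, r_n)$. Translate $q_n$ to the origin and rescale the ambient metric by $\lambda_n$, producing surfaces $\widetilde M_n$ with constant mean curvature $H_n/\lambda_n \to 0$, uniformly bounded second fundamental form on intrinsic balls of radius tending to infinity, and $|A_{\widetilde M_n}|(0) = 1$. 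Standard interior regularity and embeddedness provide smooth subsequential convergence on compact subsets of $\rth$ to a complete embedded minimal surface $M_\infty$ with bounded curvature and $|A_{M_\infty}|(0) = 1$; in particular $M_\infty$ is nonflat. Since each $\widetilde M_n$ is a disk with intrinsic radius tending to infinity, a lamination--theoretic argument combined with the Colding--Minicozzi and Meeks--Rosenberg classifications identifies $M_\infty$ as (a leaf of) a helicoid.

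\textbf{The main obstacle.} The decisive step is to exclude this helicoidal blow-up, which is where the local structure theorems of~\cite{mt7,mt8,mt9} enter. Those papers describe, near any point of concentrated curvature on an embedded CMC disk, a uniformly controlled \emph{CMC multigraph} or unduloid-type piece whose geometry depends quantitatively on~$H$ and cannot degenerate to a minimal object as long as $H$ stays bounded below. Since $H_n \geq \cH > 0$, these local CMC models must persist under the blow-up by $\lambda_n$ in a form inconsistent with the limit $M_\infty$ being minimal, yielding the required contradiction. Assembling the compatibility between this blow-up scheme and the detailed structure from the three previous papers is the principal obstacle; once the blow-up is excluded, the desired curvature bound $K(\delta, \cH)$ follows by standard compactness. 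The inequality $K \geq \sqrt{2}\,\cH$ is then automatic from $|A|^2 \geq 2 H^2$, with equality attained on round spheres of mean curvature $\cH$.
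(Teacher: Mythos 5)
There is a genuine gap at the step you yourself flag as ``the main obstacle,'' and it is not a matter of assembling details: the claimed contradiction does not exist. You assert that because $H_n\geq\cH>0$, the local CMC structure near a point of concentrated curvature ``cannot degenerate to a minimal object'' under the blow-up by $\lambda_n$, so the helicoidal limit $M_\infty$ is impossible. But the rescaled surfaces $\widetilde M_n$ have mean curvature $H_n/\lambda_n\to 0$, and a sequence of embedded CMC disks with curvature blowing up at an interior point really does converge, after this rescaling, to a vertical helicoid. This is exactly what the paper establishes in its Section on the local picture near a point of large curvature (the surfaces $\Sigma_n=\frac{1}{\sqrt2}|A_{M_n}|(p_n)\cdot(\widehat M_n-p_n)$ converge to a nonflat helicoid $\Sigma_\infty$), and far from being a contradiction, it is the starting point of the real argument. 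The actual exclusion of curvature blow-up is the content of Theorem~\ref{thm3.1} and occupies most of the paper: one uses the forming helicoid to produce a stable minimal disk on the mean convex side (Meeks--Yau), extends it horizontally to a fixed scale via Colding--Minicozzi's Theorem II.0.21, drags nodoids to force the CMC surface to have many sheets over a fixed annulus, and only then obtains a contradiction from pairs of oppositely oriented $H$-multigraphs with $H\geq\cH$ collapsing onto each other, violating the maximum principle. None of this is recoverable from the sentence you wrote, so the proposal does not constitute a proof.

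Separately, your route differs structurally from the paper's. The paper deduces the intrinsic estimate in two short steps from two black boxes: the weak chord-arc property (Theorem~\ref{thm1.1} from~\cite{mt8}), which converts the intrinsic hypothesis $d_M(p,\partial M)\geq\delta$ into the extrinsic one $d_{\rth}(p,\partial E)\geq\delta_1\delta$ for the component $E$ of $M\cap\B(p,\delta_1\delta)$ through $p$, and the extrinsic curvature estimate (Theorem~\ref{ext:cest}). If you are permitted to cite those two results, the proof is three lines and no blow-up is needed; if you are not, then the blow-up scheme you propose would still have to reprove the hard extrinsic estimate, which is where your argument currently has no content. Your observation that Theorem~\ref{rest} bounds $H_n$ from above by $\cR/\delta$ is correct but is not needed, since one can simply rescale so that $H=1$ before arguing.
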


The radius estimate in Theorem~\ref{rest}  implies that a complete,
simply-connected surface $M$ embedded in $\rth$ with nonzero
constant mean curvature is compact.
In this way Theorem~\ref{round} follows from Theorem~\ref{rest}.

We wish to emphasize  to the reader that the curvature estimates for
embedded constant mean curvature disks given in
Theorem~\ref{cest}  depend {only} on the {\em lower} positive bound
$\cH$ for their mean curvature. Previous important examples
of curvature estimates for constant mean curvature surfaces, assuming certain geometric conditions,
can be found in the literature;
see for instance~\cite{boti1,boti3, cs1,cm23,cm35,rst1,sc3,ss1,ssy1,tin3,tin4}.

Our investigation
is inspired by the pioneering work of Colding and Minicozzi
in the minimal case~\cite{cm21,cm22,cm24, cm23};
however in the constant positive mean curvature setting this
description leads to the
existence of radius and curvature estimates.
Since the plane and the helicoid are complete simply-connected minimal surfaces
properly embedded in $\rth$, a radius estimate does not hold in
the minimal case. Moreover rescalings of a helicoid
give rise to a sequence of embedded minimal disks with arbitrarily large
norms of their second fundamental forms at points that can be arbitrarily far
from their boundary curves; therefore in the minimal setting,  curvature estimates
also do not hold.

For clarity of  exposition, we will call an oriented surface
$M$ immersed in $\rth$ an {\it $H$-surface} if it
is {\it embedded}, {\em connected}  and it
has {\it positive constant mean curvature $H$}. We will  call an
$H$-surface an {\em $H$-disk} if the $H$-surface is homeomorphic
to a closed  disk in the Euclidean plane.

The  next corollary is an immediate
consequence of Theorem~\ref{cest}.

\begin{corollary} \label{corinj1} If $M$ is a
complete $H$-surface with positive injectivity radius ${r_0}$, then
 $$\sup_M |A_M|\leq K(r_0,H).$$
\end{corollary}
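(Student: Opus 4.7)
The strategy is to deduce the global bound $\sup_M |A_M| \leq K(r_0, H)$ by localizing to a fixed-size intrinsic geodesic disk around each point and then invoking the curvature estimate in Theorem~\ref{cest}.

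First I would fix an arbitrary point $p \in M$. The assumption that $M$ has injectivity radius at least $r_0 > 0$ means that the exponential map at $p$ is a diffeomorphism from the open ball of radius $r_0$ in $T_pM$ onto its image in $M$. In particular, since $r_0/2 < r_0$ lies strictly inside the injectivity domain, the closed intrinsic geodesic ball $D := \overline{B_M(p, r_0/2)}$ is diffeomorphic to a closed Euclidean disk. By the Hopf--Rinow theorem (using completeness of $M$), $D$ is compact, and because $M$ is embedded in $\rth$ with constant mean curvature $H$, $D$ is a compact $H$-disk in the sense defined in the paper.

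By construction, the intrinsic distance from $p$ to $\partial D$ equals $r_0/2$. Applying Theorem~\ref{cest} to $D$ with $\delta = r_0/2$ and $\cH = H$ gives
\[
|A_M(p)| \;=\; |A_D(p)| \;\leq\; K(r_0/2, H),
\]
where $K$ on the right is the function produced by Theorem~\ref{cest}. Since $p \in M$ was arbitrary and the right-hand side is independent of $p$, taking the supremum yields $\sup_M |A_M| \leq K(r_0/2, H)$. Relabeling the estimating function by $K(r_0,H) := K(r_0/2, H)$ finishes the proof.

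The real content of Corollary~\ref{corinj1} is packaged into Theorem~\ref{cest}, so there is no substantive obstacle in this deduction: the only point to verify is that the hypothesis on injectivity radius, combined with completeness, genuinely produces at each $p$ a compact $H$-disk containing $p$ in its interior at intrinsic distance at least $r_0/2$ from its boundary. This is immediate from the definition of injectivity radius and Hopf--Rinow, so the corollary follows by a pointwise intrinsic localization applied to the curvature estimate.
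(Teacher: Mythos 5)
Your proof is correct and follows essentially the same route as the paper, which states Corollary~\ref{corinj1} as an immediate consequence of Theorem~\ref{cest}: localize to the compact geodesic disk $\overline{B_M(p,r_0/2)}$, which the injectivity radius hypothesis and Hopf--Rinow guarantee is a compact $H$-disk, and apply the intrinsic curvature estimate with $\delta=r_0/2$. The only cosmetic difference is your explicit relabeling $K(r_0,H):=K(r_0/2,H)$, which is exactly the kind of detail the paper suppresses.
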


As complete $H$-surfaces of bounded norm of the second fundamental form are properly embedded in $\rth$
by Theorem~6.1 in~\cite{mr7}, Corollary~\ref{corinj1} implies the next result.

\begin{corollary} \label{corinj2} A
complete $H$-surface with positive injectivity radius is properly embedded in $\rth$.
\end{corollary}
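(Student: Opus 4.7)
The plan is to chain together the two inputs that the paper has already flagged: the uniform curvature bound provided by Corollary~\ref{corinj1}, and Theorem~6.1 of~\cite{mr7}, which converts such a bound into properness. So the statement is essentially a two-line consequence of results that have already been cited.

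Concretely, I would let $M$ be a complete $H$-surface with injectivity radius $\mathrm{Inj}(M) \geq r_0 > 0$. Applying Corollary~\ref{corinj1} to $M$ with this choice of $r_0$ gives the global second fundamental form bound
\[
\sup_{M} |A_{M}| \;\leq\; K(r_0, H),
\]
where $K(\cdot,\cdot)$ is the constant furnished by Theorem~\ref{cest}. In particular, $|A_M|$ is a bounded function on all of $M$.

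Next I would invoke Theorem~6.1 of~\cite{mr7}: every complete $H$-surface in $\rth$ whose second fundamental form is bounded in norm is properly embedded in $\rth$. Since the previous step shows that the hypothesis of that theorem is satisfied, we conclude that $M$ is properly embedded, completing the argument.

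The main obstacle, in the sense of genuine mathematical content, lies entirely upstream of this corollary: all of the substance is packed into Theorem~\ref{cest} (from which Corollary~\ref{corinj1} is immediate, using that the injectivity radius lower bound forces the intrinsic distance to the boundary of arbitrarily large intrinsic balls to exceed $r_0$) and into Theorem~6.1 of~\cite{mr7}. Once those two inputs are in hand, there is no remaining difficulty; one simply has to verify that the hypotheses line up, which they do verbatim. Thus I expect the proof of Corollary~\ref{corinj2} to be essentially a one-paragraph citation argument rather than a new computation.
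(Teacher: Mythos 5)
Your proposal is correct and follows exactly the route the paper takes: Corollary~\ref{corinj1} yields $\sup_M|A_M|\leq K(r_0,H)$, and Theorem~6.1 of~\cite{mr7} then upgrades the bounded second fundamental form to properness. The paper states this as a one-sentence deduction immediately before the corollary, so your account matches it verbatim.
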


Since there exists an $\ve>0$ such that for any $C>0$, every complete immersed
surface $\S$ in $\rth$ with $\sup_{\S}|A_\S|<C$ has injectivity radius greater
than $\ve/ C$, Corollary~\ref{corinj1} also demonstrates that a necessary and sufficient condition
for an $H$-surface to have bounded norm of the second fundamental form is that it has positive injectivity radius.

\begin{corollary}
 A complete $H$-surface has positive injectivity radius
 if and only if it has bounded norm of the second fundamental form.
\end{corollary}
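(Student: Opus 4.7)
The plan is to establish both implications separately; each reduces immediately to a statement already available in the paper.

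For the direction ``positive injectivity radius $\Rightarrow$ bounded $|A_M|$,'' I would simply invoke the preceding Corollary~\ref{corinj1}. If $M$ is a complete $H$-surface whose injectivity radius is at least $r_0 > 0$, then Corollary~\ref{corinj1} yields $\sup_M |A_M| \le K(r_0, H) < \infty$, where $K(r_0, H)$ is the constant supplied by the curvature estimate of Theorem~\ref{cest}. No further work is needed on this side.

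For the converse ``bounded $|A_M|$ $\Rightarrow$ positive injectivity radius,'' the $H$-surface hypothesis plays no role. The argument relies solely on the general fact stated in the paragraph immediately preceding the corollary: there exists a universal $\ve > 0$ such that every complete immersed surface $\S$ in $\rth$ with $\sup_\S |A_\S| < C$ has injectivity radius at least $\ve/C$. Applying this with $\S = M$ and $C = \sup_M |A_M| + 1$ produces the desired positive lower bound $\ve/C$ on the injectivity radius of $M$.

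The only non-mechanical ingredient is the universal injectivity-radius bound in terms of $|A|$, which the authors cite as known. Conceptually it follows from the fact that, at the scale $r \sim 1/C$, a uniform bound on $|A|$ forces $M$ to be locally a graph with controlled geometry over its tangent plane, so no short geodesic loop can close up without contradicting the curvature bound. Since both halves of the equivalence are one-line consequences of results already in hand, I anticipate no real obstacle in writing out the proof.
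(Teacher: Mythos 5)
Your proposal is correct and follows exactly the argument the paper gives: the forward implication is Corollary~\ref{corinj1} applied with $r_0$ equal to the injectivity radius, and the converse is the universal fact that $\sup_\S|A_\S|<C$ forces injectivity radius at least $\ve/C$, which the authors state in the sentence immediately preceding the corollary. There is nothing to add.
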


In Section~\ref{CY-problem} we  obtain curvature estimates
for $H$-surfaces that are annuli; these estimates
are analogous to
the curvature estimates in Theorem~\ref{cest} for $H$-disks
but necessarily must also depend on the
flux\footnote{See Definitions~\ref{def:flux}
and \ref{flux-annulus} for the definition of this flux.} of a given annulus.
We then apply these new curvature estimates to prove the next Theorem~\ref{annulus} on the
properness of complete $H$-surfaces of finite topology.
Earlier as the main result in~\cite{cm35}, Colding and Minicozzi proved the
similar theorem that complete  minimal surfaces
of finite topology embedded in $\rth$ are proper, thereby solving the classical
Calabi-Yau problem in the minimal setting.

\begin{theorem}~\label{annulus} A complete $H$-surface  with smooth
compact boundary (possibly empty) and finite topology has
bounded norm of the second fundamental form and  is properly embedded in $\rth$.
\end{theorem}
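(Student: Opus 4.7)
The plan is to reduce the theorem to a uniform bound on $|A_M|$ by isolating the ends, bounding the second fundamental form on each annular end via the flux-dependent curvature estimate announced in Section~\ref{CY-problem}, and then invoking Theorem~6.1 of~\cite{mr7} (the result cited just before Corollary~\ref{corinj2}) to promote bounded $|A_M|$ to proper embeddedness in $\rth$.

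First, using that $M$ has finite topology and smooth compact boundary, I would choose a compact subsurface $K\subset M$ with $\partial M\subset K$ such that $M\setminus \Int(K)$ is a disjoint union of finitely many annular ends $E_1,\ldots,E_k$, each homeomorphic to $\esf^1\times[0,\infty)$. On $K$ the surface is smooth and compact, so $|A_M|$ is bounded there; hence it suffices to bound $|A_M|$ uniformly on each $E_i$ away from $\partial E_i$.

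Next, fix an end $E_i$ and let $F_i\in\rth$ denote its flux, a well-defined topological invariant (Definition~\ref{flux-annulus}) computed on any loop generating $H_1(E_i)$. Exhaust $E_i$ by an increasing sequence of compact $H$-annuli $A_{i,n}\subset E_i$ whose outer boundary equals $\partial E_i$ and whose inner boundary escapes every compact set as $n\to\infty$; by homological invariance, each $A_{i,n}$ has flux $F_i$. The curvature estimate for $H$-annuli from Section~\ref{CY-problem}, applied with intrinsic distance parameter $\delta>0$, then furnishes a constant $C=C(\delta,H,|F_i|)$ such that $|A_{A_{i,n}}|(p)\leq C$ at every $p\in A_{i,n}$ with $d_{A_{i,n}}(p,\partial A_{i,n})\geq\delta$. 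Letting $n\to\infty$, any $p\in E_i$ with $d_{E_i}(p,\partial E_i)\geq\delta$ eventually lies at intrinsic distance $\geq\delta$ from both boundary components of $A_{i,n}$, so the bound passes to $E_i$.

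Combining the uniform bound on the compact piece $K$ with the finitely many uniform bounds on the ends $E_1,\ldots,E_k$ yields $\sup_M|A_M|<\infty$, and Theorem~6.1 of~\cite{mr7} then gives that $M$ is properly embedded in $\rth$. The main obstacle is establishing the flux-dependent curvature estimate for embedded $H$-annuli in the first place: a non-simply-connected neighborhood of infinity cannot be controlled by $\delta$ and $H$ alone and genuinely requires the flux as an additional input. This analytic step, which replaces the purely disk-based estimate of Theorem~\ref{cest}, is the heart of Section~\ref{CY-problem}; once it is in hand, the exhaustion and topological decomposition above are essentially bookkeeping.
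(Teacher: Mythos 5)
Your proposal is correct and follows essentially the same route as the paper: the paper likewise reduces to the flux-dependent control on annular ends (Proposition~\ref{assertion}, whose combination with the disk curvature estimate is exactly the annulus curvature estimate of Corollary~\ref{assertion2} that you invoke), notes that each of the finitely many ends has flux either zero or bounded away from zero, concludes $\sup_M|A_M|<\infty$, and then applies Theorem~6.1 of~\cite{mr7} for properness, with the nonempty-boundary case handled as in Remark~\ref{thm1.6}. Your exhaustion of each end by compact subannuli and the separate treatment of the compact core are just a mild reorganization of the paper's passage through the positivity of the injectivity radius function of $M$.
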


Theorem~\ref{annulus} shows that certain classical results for $H$-surfaces
hold when the hypothesis of ``properly embedded" is
replaced by the weaker hypothesis of ``complete and embedded."
For instance, in the seminal paper~\cite{kks1}, Korevaar, Kusner and Solomon proved
that the ends of a properly embedded
$H$-surface of finite topology in $\rth$ are asymptotic to the ends of
surfaces of revolution defined by Delaunay~\cite{de1} in 1841, and that if such a surface has
two ends, then it must be a Delaunay surface.
Earlier Meeks~\cite{me17} proved that a properly embedded
$H$-surface of finite topology in $\rth$
cannot have one end.  In particular, this last result together with Theorem~\ref{annulus}
 gives a  generalization of Theorem~\ref{round}.

The  theory developed
in this manuscript also provides key tools for understanding the geometry
of $H$-disks in a Riemannian three-manifold, especially in the case that
the manifold is locally homogeneous.  These generalizations and applications of the work presented here
will appear in our forthcoming paper~\cite{mt1}.
See~\cite{mt12} for applications of the present paper to obtain area estimates for
closed $H$-surfaces of fixed genus embedded in a flat 3-torus; see~\cite{cmt2,cmt1,rodt1} for examples 
that demonstrate that
Theorem~\ref{annulus} does not hold in the hyperbolic 3-space $\mathbb{H}^3$ when  $H\in [0,1)$ and 
in the Riemannian 
product $\mathbb{H}^2\times\mathbb R$ 
when $H\in [0,1/2)$.


\section{The Intrinsic Curvature and Radius Estimates.} \label{sec:4}

In~\cite{mt7} we proved the following extrinsic curvature and radius estimates for compact  disks embedded in $\rth$ with
constant mean curvature.

\begin{theorem}[Extrinsic Curvature Estimates] \label{ext:cest}
Given $\delta,\cH>0$, there exists a constant $K_0(\delta,\cH)$ such that
for any $H$-disk $\cD$ with $H\geq \cH$,
$${\large{\LARGE \sup}_{\large \{p\in \cD \, \mid \, d_{\rth}(p,\partial
\cD)\geq \delta\}} |A_\cD|\leq  K_0(\delta,\cH)}.$$
\end{theorem}

\begin{theorem}[Extrinsic Radius Estimates] \label{ext:rest}
There exists a constant ${\mathcal R}_{0}\geq \pi$ such that any $H$-disk
$\cD$ has extrinsic radius less than {\Large $\frac{{\mathcal R}_0}{H}$.}
In other words, for any point $p\in \cD$,
$$d_{\rth}(p,\partial \cD)<{\cR_0}/{H}.$$
\end{theorem}

Thus, Theorems~\ref{rest}
and \ref{cest} are immediate consequences of   a
chord-arc type result  from~\cite{mt8},
namely Theorem~\ref{thm1.1} below, and Theorems~\ref{ext:rest}
and \ref{ext:cest}.
Key ingredients in the proof of Theorem~\ref{thm1.1}
include  results in~\cite{mt9}
and the main theorem in~\cite{mt13}.
The results in~\cite{mt13} and~\cite{mt9} that are
needed to prove Theorem~\ref{thm1.1} below make use of
theorems and tools discussed in~\cite{mt7}. The results
in~\cite{mt7,mt8,mt13,mt9}  are inspired by and generalize the main
results of Colding and Minicozzi in~\cite{cm23,cm35} for minimal
disks to the case of $H$-disks.

\begin{definition} {\em Given a point $p$ on a compact surface
$\Sigma\subset \rth$, $\S (p,R)$ denotes the closure of
 the component of $\Sigma \cap \B(p,R)$ passing through $p$.}
\end{definition}

\begin{theorem}[Weak chord arc property] \label{thm1.1} There exists a $\delta_1 \in (0,
\frac{1}{2})$  such that the following holds.

Let $\S$ be an   $H$-disk in $\rth.$  Then for all closed
intrinsic balls $\ov{B}_\S(x,R)$ in $\S-
\partial \S$: \ben[1.]\item $\S (x,\delta_1 R)$ is a disk with piecewise smooth boundary
$\partial \Sigma(x,\delta_1 R)\subset \partial \B(\de_1R)$. \item
$
 \S (x, \delta_1 R) \subset B_\S (x, \frac{R}{2}).$
\een
\end{theorem}

We begin by applying Theorem~\ref{thm1.1} to prove the intrinsic radius estimate.

\begin{proof}[Proof of Theorem~\ref{rest}.]
Without loss of generality, fix $H=1$. Arguing by
contradiction,  if the radius estimates
were false then there would exist a sequence of
$1$-disks containing arbitrarily large
geodesic balls with centers at the origin
$\vec{0}\in \rth$. Let $\Sigma_n$ denote the sequence of such $1$-disks and let
$B_\Sigma (\vec 0, n)\subset \Sigma_n$
be the sequence of geodesic balls. Theorem~\ref{thm1.1} implies $B_\Sigma (\vec 0, n)\subset \Sigma_n$
contains a 1-disk centered at $\vec{0}$ of extrinsic radius $\delta_1n$. For $n$
large enough, this contradicts the
Extrinsic Radius Estimate and completes the proof of Theorem~\ref{rest}.
\end{proof}

We next prove the  intrinsic curvature estimate.

\begin{proof}[Proof of Theorem~\ref{cest}]
  Let $\ve=\delta_1\delta$,
where $\delta_1\in (0,\frac{1}{2})$ is given in Theorem~\ref{thm1.1} and let
$K(\delta,\cH):=K_0(\ve,\cH)$, where $K_0{(\ve,\cH)}$ is given in
Theorem~\ref{ext:cest}. Let $\cD$ be an $H$-disk
with $H\geq\mathcal H$ and let $p\in \cD$ be a point
with $d_\cD(p,\partial \cD)\geq \delta$. By Theorem~\ref{thm1.1}, the closure of the
component   $E$ of $\cD\cap {\B}(p,\ve)$  containing $p$ is an $H$-disk in the
interior of $\cD$ with $\partial E\subset \B(p,\ve)$. By Theorem~\ref{ext:cest},
\[
|A_E|(p)\leq K_0(\ve,\cH)=K(\delta,\cH).
\]
 This completes the proof of
Theorem~\ref{cest}.
\end{proof}

\section{Curvature estimates
for $H$-annuli and properness of H-surfaces with finite topology.} \label{CY-problem}
A classical conjecture in the global theory of minimal surfaces,
first stated by Calabi in 1965~\cite{ca4} and later revisited by
Yau~\cite{yau1, yau2},  is the following:
\begin{conjecture}[Calabi-Yau Conjecture] There do not exist  complete
immersed minimal surfaces in a bounded domain in $\rth$.  \end{conjecture}
Based on earlier work of Jorge and Xavier~\cite{jx1},
Nadirashvili~\cite{na1} proved the existence of
 a complete, bounded, immersed minimal surface in
$\rth$, thereby disproving the  above conjecture.
In contrast to these results,
Colding and Minicozzi proved in~\cite{cm35} that complete, finite topology minimal
surfaces {\it embedded} in $\rth$ are proper. Thus, the  Calabi-Yau conjecture holds
in the classical setting of
complete, embedded, finite topology minimal
surfaces.

In this section we will apply Proposition~\ref{assertion} below to
obtain Theorem~\ref{annulus}, a result that
generalizes the  properness result
of Colding and Minicozzi for embedded minimal
surfaces of finite topology to the setting of $H$-surfaces.
In the proof of this proposition we will need to
apply the main theorems in~\cite{mt14}, whose proofs depend on
the results in the first three sections of the present paper, as well
as results in~\cite{mt8,mt13,mt9}.

Recall first the definition of
flux of a 1-cycle in an $H$-surface; see for instance~\cite{kks1,ku2,smyt1}
for further discussion of this invariant.

\begin{definition} \label{def:flux} {\em
Let $\gamma$ be a piecewise-smooth 1-cycle in an $H$-surface $M$. The  flux of
$\gamma$ is $\int_{\gamma}(H\gamma+\xi)\times \dot{\gamma}$, where $\xi$
is the unit normal to $M$ along $\gamma$ and $\gamma $ is
parameterized by arc length. }
\end{definition}

The flux is a homological invariant and
we say that $M$ has {\em  zero flux} if the flux of any 1-cycle in $M$ is zero;
in particular, since the first homology group of a disk is
zero, the flux of an $H$-disk is zero.

\begin{definition} \label{flux-annulus} {\em
Let $E$ be a compact $H$-annulus.  Then the {\em flux} \,$F(E)$ of $E$ is the length
of the flux vector of either  generator of
the first homology group of $E$.}
\end{definition}

The next proposition implies that given a compact $1$-annulus
with a fixed positive (or zero) flux, then given $\de>0$, the injectivity
radius function on this annulus is bounded  away from zero
at points of distance greater than $\de$ from its boundary.

\begin{proposition} \label{assertion} Given $\rho>0$ and $\delta\in (0,1)$
there exists a positive constant $I_0(\rho,\delta)$
  such that if $E$ is
a compact 1-annulus with $F(E)\geq \rho$  or with $F(E)=0$,
then $$\inf_{\{p\in E \; \mid \; d_E(p,\partial E)\geq \delta\}} I_E\geq I_0(\rho,\delta), $$
where $I_E\colon E\to [0,\infty)$ is the injectivity radius function of $E$.
\end{proposition}
\begin{proof} Arguing by contradiction, suppose there exist a $\rho>0$ and a
sequence $E(n)$ of compact 1-annuli satisfying $F(E(n))\geq \rho>0 $
or $F(E(n))=0$, with injectivity radius functions
$I_n\colon E(n)\to [0,\infty)$ and  points $p(n)$ in
$\{q\in E(n)\mid d_{E(n)}(q,\partial E(n))\geq \delta\}$ with
\[
I_n(p(n))\leq\frac{1}{n}.
\] We next use the fact that the injectivity radius function on a
complete Riemannian manifold with boundary is continuous.

 For each $p(n)$ consider a point
$q(n)\in \ov{B}_{E(n)}(p(n),\delta/2)$ where the following positive continuous
function obtains its maximum value:
\[
f\colon  \ov{B}_{E(n)}(p(n),\delta/2)\to (0,\infty),
\]
\[
f(x)=\frac{d_{E(n)}(x,\partial \ov{B}_{E(n)}(p(n),\delta/2))}{I_n(x)}.
\]
Let $r(n)=\frac{1}{2}d_{E(n)}(q(n),\partial \ov{B}_{E(n)}(p(n),\delta/2))$
and note that
\[
\frac{\delta/2}{I_n(q(n))}\geq\frac{2r(n)}{I_n(q(n))}=f(q(n))\geq f(p(n))\geq n\de/2.
\]
Moreover, if $x\in \ov{B}_{E(n)}(q(n),r(n))$, then by the triangle inequality,
\[
\frac{r(n)}{I_n(x)}\leq \frac{d_{E(n)}(x,
\partial \ov{B}_{E(n)}(p(n),\delta/2))}{I_n(x)}=f(x)\leq f(q(n))=\frac{2r(n)}{I_n(q(n))}.
\]
 Therefore, for $n$ large the $H_n$-surfaces
 $M(n)=\frac{1}{I_n(q(n))}[\ov{B}_{E(n)}(q(n),r(n))-q(n)]$ satisfy the following conditions:

 \begin{itemize}
\item $I_{M(n)}(\vec{0})=1$;
\item $d_{M(n)}(\vec{0},\partial M(n))\geq \frac{n\delta}{4}$;
\item  $I_{M(n)}(x)\geq \frac12$  for any $x\in \ov{B}_{M(n)}(\vec  0, \frac{n\delta}{4})$.
\end{itemize}

By Theorem~3.2 in~\cite{mt8}, for any $k\in \N$, there exists an $n(k)\in \N$ such
that
the closure of the component
$\Delta(n(k))$ of $M(n(k))\cap \B(k)$ containing the origin is a compact $H_{n(k)}$-surface
with boundary in $\partial \B(k)$ and the injectivity radius function of
$\Delta(n(k))$ restricted to  points in
$\Delta(n(k))\cap\B(k-\frac12)$ is at least $\frac12$.
By Theorem~1.3 of~\cite{mt14}, for $k$ sufficiently
large, $\Delta(n(k))$  contains a simple closed curve $\G(n(k))$
with the length of its nonzero flux vector bounded from above by some
constant $C>0$.
 Since the curves $\G(n(k))$ are rescalings of simple closed curves
$\wt{\G}(n(k))\subset E(n(k))$,  then the $\wt{\G}(n(k))$ are simple closed
curves with  nonzero flux. Hence these simple closed curves  are generators of the
first homology group of the annuli  $E(n(k))$. This immediately gives a contradiction in the case
that $F(E(n(k)))=0$.
If  $F(E(n(k)))\geq \rho>0$, we have that
\[
C\geq |F({\G}(n(k)))|=|F\left(\frac{1}{I_{n(k)}(q(n(k)))}\wt{\G}(n(k))\right)|
=\frac{|F(\wt{\G}(n(k)))|}{I_{n(k)}(q(n(k)))}
\]
\[
=\frac{F(E(n(k))}{I_{n(k)}(q(n(k)))}
\geq \frac{\rho}{I_{n(k)}(q(n(k)))}\geq\rho n(k).
\]
These inequalities  lead to  a contradiction for $n(k)>\frac{C}{\rho}$,
which completes the proof of the proposition.
\end{proof}

An immediate consequence of Proposition~\ref{assertion}
and the intrinsic curvature estimates for
$H$-disks
is the following result.

\begin{corollary} \label{assertion2} Given $\rho>0$ and $\delta\in (0,1)$
there exists a positive constant $A_0(\rho,\delta)$ such that if $E$ is
a compact 1-annulus with $F(E)\geq \rho$ or  with $F(E)=0$, then
\[
\sup_{\{p\in E\ \mid \, d_E(p,\partial E)\geq \delta\}} |A_E|\leq A_0(\rho,\delta).
\]
\end{corollary}

When $M$ has finite topology, the flux of each of its finitely many annular
ends is either zero or bounded away
from zero by a fixed positive number. Thus, Proposition~\ref{assertion} implies that the
injectivity radius function of $M$ is positive, and so the
norm of its second fundamental is bounded by Theorem~\ref{cest}.
The next corollary is a consequence of this last property and  the fact that a complete embedded
$H$-surface of bounded norm of the second fundamental form is properly embedded in
$\rth$; see Theorem~6.1 in~\cite{mr7} or item~1
of Corollary~2.5 in~\cite{mt3} for this properness
result.

\begin{corollary} \label{cor:finitetop}
A complete surface $M$ with finite topology embedded
in $\rth$ with nonzero constant mean
curvature has bounded norm of the second fundamental form and is properly embedded in $\rth$.
\end{corollary}

\begin{remark} \label{thm1.6}  {\em With slight modifications, the
proof of the above corollary generalizes to the case where the
$H$-surface $M$ above
is allowed to have smooth compact boundary;
 for example, see~\cite{mt4} for these types of  adaptations.
 Thus, Theorem~\ref{annulus} holds as well.}
 \end{remark}
Corollary~\ref{cor:finitetop} motivates our
conjecture below concerning the properness and at most cubical area growth estimates for
complete surfaces $M$ embedded in $\rth$ with finite
genus and constant mean curvature.

\begin{conjecture}\label{conjCY} {\em
A complete surface $M$ of finite genus embedded in $\rth$ with constant mean curvature
has at most cubical area growth in the sense that
such an $M$ has area less than $CR^3$ in ambient balls of radius $R$ for some $C$ depending on $M$.
In particular every such surface is properly embedded in $\rth$.
}\end{conjecture}

Conjecture~\ref{conjCY} holds for
complete minimal surfaces embedded in $\rth$
with a countable number of ends and finite genus. This cubical volume growth result follows from the properness
of such minimal surfaces (by Meeks, Perez and Ros in~\cite{mpr9}), because
properly embedded minimal surfaces in $\rth$  of finite genus
have bounded norm of the second fundamental form (by  Meeks, Perez and Ros in~\cite{mpr3}) and because
properly embedded minimal surfaces in $\rth$
with bounded norm of the second fundamental form have at most cubical volume growth (by Meeks and Rosenberg in~\cite{mr7}).

\vspace{.3cm}
\center{William H. Meeks, III at profmeeks@gmail.com\\
Mathematics Department, University of Massachusetts, Amherst, MA 01003}
\center{Giuseppe Tinaglia at giuseppe.tinaglia@kcl.ac.uk\\ Department of
Mathematics, King's College London,
London, WC2R 2LS, U.K.}

\bibliographystyle{plain}
\bibliography{bill}

\begin{thebibliography}{10}

\bibitem{aa1}
A.~D. Alexandrov.
\newblock Uniqueness theorems for surfaces in the large {I}.
\newblock {\em Vestnik Leningrad Univ. Math.}, 11(19):5--17, 1956.
\newblock MR0150706.

\bibitem{bb1}
J.~Bernstein and C.~Breiner.
\newblock Helicoid-like minimal disks and uniqueness.
\newblock {\em J. Reine Angew. Math.}, 655:129--146, 2011.
\newblock MR2806108, Zbl 1225.53008.

\bibitem{boti1}
T.~Bourni and G.~Tinaglia.
\newblock Curvature estimates for surfaces with bounded mean curvature.
\newblock {\em Trans. Amer. Math. Soc.}, 364(11):5813--5828, 2012.

\bibitem{boti3}
T.~Bourni and G.~Tinaglia.
\newblock {$C^{1,\alpha}$}-regularity for surfaces with {$H\in L^p$}.
\newblock {\em Ann. Global Anal. Geom.}, 46(2):159--186, 2014.

\bibitem{ca4}
E.~Calabi.
\newblock Problems in differential geometry.
\newblock In {\em Proceedings of the United States-Japan Seminar in
  Differential Geometry, Kyoto, Japan 1965}, page 170. Nippon Hyoronsha Co.
  Ltd., Tokyo, 1966.

\bibitem{cs1}
H.~I. Choi and R.~Schoen.
\newblock The space of minimal embeddings of a surface into a three-dimensional
  manifold of positive {R}icci curvature.
\newblock {\em Invent. Math.}, 81:387--394, 1985.
\newblock MR0807063, Zbl 0577.53044.

\bibitem{cm21}
T.~H. Colding and W.~P. Minicozzi~II.
\newblock The space of embedded minimal surfaces of fixed genus in a
  $3$-manifold {I}; {E}stimates off the axis for disks.
\newblock {\em Ann. of Math.}, 160:27--68, 2004.
\newblock MR2119717, Zbl 1070.53031.

\bibitem{cm22}
T.~H. Colding and W.~P. Minicozzi~II.
\newblock The space of embedded minimal surfaces of fixed genus in a
  $3$-manifold {I}{I}; {M}ulti-valued graphs in disks.
\newblock {\em Ann. of Math.}, 160:69--92, 2004.
\newblock MR2119718, Zbl 1070.53032.

\bibitem{cm24}
T.~H. Colding and W.~P. Minicozzi~II.
\newblock The space of embedded minimal surfaces of fixed genus in a
  $3$-manifold {I}{I}{I}; {P}lanar domains.
\newblock {\em Ann. of Math.}, 160:523--572, 2004.
\newblock MR2123932, Zbl 1076.53068.

\bibitem{cm23}
T.~H. Colding and W.~P. Minicozzi~II.
\newblock The space of embedded minimal surfaces of fixed genus in a
  $3$-manifold {I}{V}; {L}ocally simply-connected.
\newblock {\em Ann. of Math.}, 160:573--615, 2004.
\newblock MR2123933, Zbl 1076.53069.

\bibitem{cm35}
T.~H. Colding and W.~P. Minicozzi~II.
\newblock The {C}alabi-{Y}au conjectures for embedded surfaces.
\newblock {\em Ann. of Math.}, 167:211--243, 2008.
\newblock MR2373154, Zbl 1142.53012.

\bibitem{cmt2}
B.~Coskunuzer, W.~H. Meeks~III, and G.~Tinaglia.
\newblock Non-properly embedded {H}-planes in $\mathbb{H}^2\times\r$.
\newblock Preprint available at https://nms.kcl.ac.uk/giuseppe.tinaglia.

\bibitem{cmt1}
B.~Coskunuzer, W.~H. Meeks~III, and G.~Tinaglia.
\newblock Non-properly embedded {H}-planes in $\mathbb{H}^3$.
\newblock To appear in J. Differential Geometry, preprint at
  http://arxiv.org/pdf/1503.04641.pdf.

\bibitem{de1}
C.~Delaunay.
\newblock Sur la surface de revolution dont la courbure moyenne est constante.
\newblock {\em J. Math. Pures Appl.}, 16:309--321, 1841.

\bibitem{hf1}
H.~Hopf.
\newblock {\em Differential Geometry in the Large}, volume 1000 of {\em Lecture
  Notes in Math.}
\newblock Springer-Verlag, 1989.
\newblock MR1013786, Zbl 0669.53001.

\bibitem{jx1}
L.~Jorge and F.~Xavier.
\newblock A complete minimal surface in $\rth$ between two parallel planes.
\newblock {\em Ann. of Math.}, 112:203--206, 1980.
\newblock MR0584079, Zbl 0455.53004.

\bibitem{kks1}
N.~Korevaar, R.~Kusner, and B.~Solomon.
\newblock The structure of complete embedded surfaces with constant mean
  curvature.
\newblock {\em J. Differential Geom.}, 30:465--503, 1989.
\newblock MR1010168, Zbl 0726.53007.

\bibitem{ku2}
R.~Kusner.
\newblock {\em Global geometry of extremal surfaces in three-space}.
\newblock PhD thesis, University of California, Berkeley, 1988.
\newblock MR2636899.

\bibitem{me17}
W.~H. Meeks~III.
\newblock The topology and geometry of embedded surfaces of constant mean
  curvature.
\newblock {\em J. of {D}ifferential {G}eom.}, 27:539--552, 1988.
\newblock MR0940118, Zbl 0617.53007.

\bibitem{mpr9}
W.~H. Meeks~III, J.~P\'{e}rez, and A.~Ros.
\newblock The embedded {C}alabi-{Y}au conjectures for finite genus.
\newblock Work in progress.

\bibitem{mpr3}
W.~H. Meeks~III, J.~P\'{e}rez, and A.~Ros.
\newblock The geometry of minimal surfaces of finite genus {I}; curvature
  estimates and quasiperiodicity.
\newblock {\em J. Differential Geom.}, 66:1--45, 2004.
\newblock MR2128712, Zbl 1068.53012.

\bibitem{mr8}
W.~H. Meeks~III and H.~Rosenberg.
\newblock The uniqueness of the helicoid.
\newblock {\em Ann. of Math.}, 161:723--754, 2005.
\newblock MR2153399, Zbl 1102.53005.

\bibitem{mr7}
W.~H. Meeks~III and H.~Rosenberg.
\newblock Maximum principles at infinity.
\newblock {\em J. Differential Geom.}, 79(1):141--165, 2008.
\newblock MR2401421, Zbl 1158.53006.

\bibitem{mt12}
W.~H. Meeks~III and G.~Tinaglia.
\newblock Area estimates for triply-periodic {C}{M}{C} surfaces.
\newblock Preprint available at https://nms.kcl.ac.uk/giuseppe.tinaglia.

\bibitem{mt8}
W.~H. Meeks~III and G.~Tinaglia.
\newblock Chord arc properties for constant mean curvature disks.
\newblock Preprint at http://arxiv.org/pdf/1408.5578.pdf.

\bibitem{mt7}
W.~H. Meeks~III and G.~Tinaglia.
\newblock Curvature estimates for constant mean curvature surfaces.
\newblock Preprint available at http://arxiv.org/pdf/1502.06110.pdf.

\bibitem{mt1}
W.~H. Meeks~III and G.~Tinaglia.
\newblock ${H}$-surfaces in homogeneous three-manifolds.
\newblock Work in progress.

\bibitem{mt13}
W.~H. Meeks~III and G.~Tinaglia.
\newblock Limit lamination theorem for ${H}$-disks.
\newblock Preprint available at http://arxiv.org/pdf/1510.05155.pdf.

\bibitem{mt14}
W.~H. Meeks~III and G.~Tinaglia.
\newblock Limit lamination theorem for ${H}$-surfaces.
\newblock To appear in J. Reine Angew. Math. Preprint available at
  http://arxiv.org/pdf/1510.07549.pdf.

\bibitem{mt9}
W.~H. Meeks~III and G.~Tinaglia.
\newblock One-sided curvature estimates for ${H}$-disks.
\newblock Preprint available at http://arxiv.org/pdf/1408.5233.pdf.

\bibitem{mt4}
W.~H. Meeks~III and G.~Tinaglia.
\newblock The dynamics theorem for ${C}{M}{C}$ surfaces in $\rth$.
\newblock {\em J. of Differential Geom.}, 85:141--173, 2010.
\newblock MR2719411, Zbl 1203.53009.

\bibitem{mt3}
W.~H. Meeks~III and G.~Tinaglia.
\newblock Existence of regular neighborhoods for ${H}$-surfaces.
\newblock {\em Illinois J. of Math.}, 55(3):835--844, 2011.
\newblock MR3069286, Zbl 1269.53014.

\bibitem{na1}
N.~Nadirashvili.
\newblock {H}adamard's and {C}alabi-{Y}au's conjectures on negatively curved
  and minimal surfaces.
\newblock {\em Invent. Math.}, 126(3):457--465, 1996.
\newblock MR1419004, Zbl 0881.53053.

\bibitem{rodt1}
M.~M. Rodr\'{\i}guez and G.~Tinaglia.
\newblock Non-proper complete minimal surfaces embedded in
  $\mathbb{H}^2\times\mathbb{R}$.
\newblock {\em Int Math Res Notices}, 2015(12), 2015.
\newblock Available at arXiv:1211.5692.

\bibitem{rst1}
H.~Rosenberg, R.~Souam, and E.~Toubiana.
\newblock General curvature estimates for stable {$H$}-surfaces in
  $3$-manifolds and applications.
\newblock {\em J. Differential Geom.}, 84(3):623--648, 2010.
\newblock MR2669367, Zbl 1198.53062.

\bibitem{sc3}
R.~Schoen.
\newblock {\em Estimates for Stable Minimal Surfaces in Three Dimensional
  Manifolds}, volume 103 of {\em Ann. of Math. Studies}.
\newblock Princeton University Press, 1983.
\newblock MR0795231, Zbl 532.53042.

\bibitem{ss1}
R.~Schoen and L.~Simon.
\newblock Surfaces with quasiconformal {G}auss map.
\newblock In {\em Seminar on Minimal Submanifolds}, volume 103 of {\em Ann. of
  Math. Studies}, pages 127--146. Princeton University Press, 1983.
\newblock E. {B}ombieri, editor.

\bibitem{ssy1}
R.~Schoen, L.~Simon, and S.~T. Yau.
\newblock Curvature estimates for minimal hypersurfaces.
\newblock {\em Acta Math.}, 134:275--288, 1975.
\newblock MR0423263 (54$\# $11243), Zbl 0323.53039.

\bibitem{smyt1}
B.~Smyth and G.~Tinaglia.
\newblock The number of constant mean curvature isometric immersions of a
  surface.
\newblock {\em Comment. Math. Helv.}, 88(1):163--183, 2013.
\newblock MR3008916, Zbl 1260.53023.

\bibitem{tin3}
G.~Tinaglia.
\newblock Curvature estimates for minimal surfaces with total boundary
  curvature less than 4$\pi$.
\newblock {\em Proceedings of the AMS}, 137:2445--2450, 2009.
\newblock MR2495281, Zbl 1169.53008.

\bibitem{tin4}
G.~Tinaglia.
\newblock On curvature estimates for constant mean curvature surfaces.
\newblock In {\em Geometric analysis: partial differential equations and
  surfaces}, volume 570 of {\em Contemp. Math.}, pages 165--185. Amer. Math.
  Soc., Providence, RI, 2012.

\bibitem{yau1}
S.T. Yau.
\newblock Problem section, {S}eminar on {D}ifferential {G}eometry.
\newblock In {\em Annals of Math Studies}, volume 102, pages 669--706, 1982.
\newblock MR0645762, Zbl 0479.53001.

\bibitem{yau2}
S.T. Yau.
\newblock Review of geometry and analysis.
\newblock In {\em Mathematics: frontiers and prospectives}, pages 353--401.
  Amer. Math. Soc., Providence, RI, 2000.
\newblock MR1754787, Zbl 0969.53001.

\end{thebibliography}

\end{document}